\documentclass[11pt,reqno]{amsart}
\usepackage[T1]{fontenc}
\usepackage[utf8]{inputenc}
\usepackage{lmodern}
\usepackage{amsmath,amssymb,mathtools,mathrsfs}
\usepackage{microtype}
\usepackage{tikz-cd}
\usepackage[margin=1.05in]{geometry}
\usepackage{enumitem}
\usepackage[dvipsnames]{xcolor}
\definecolor{darkblue}{rgb}{0,0,0.6}
\usepackage[ocgcolorlinks]{hyperref}
\hypersetup{colorlinks=true,citecolor=darkblue,linkcolor=darkblue,urlcolor=darkblue,breaklinks=true}
\setlist[enumerate]{label=\textup{(\roman*)},leftmargin=2.1em,itemsep=3pt}
\newtheorem{maintheorem}{Theorem}

\newtheorem{theorem}{Theorem}[section]
\newtheorem{proposition}[theorem]{Proposition}
\newtheorem{lemma}[theorem]{Lemma}

\theoremstyle{definition}
\newtheorem{definition}[theorem]{Definition}

\theoremstyle{remark}
\newtheorem{remark}[theorem]{Remark}
\numberwithin{equation}{section}
\newcommand{\F}{\mathbb F_2}
\newcommand{\Z}{\mathbb Z}

\newcommand{\T}{\mathbb Q/\mathbb Z}
\newcommand{\im}{\operatorname{im}}
\newcommand{\Hom}{\operatorname{Hom}}

\newcommand{\Out}{\operatorname{Out}}
\newcommand{\Sq}{\operatorname{Sq}}
\newcommand{\rank}{\operatorname{rank}}
\newcommand{\ks}{\operatorname{ks}}
\newcommand{\ter}{\operatorname{ter}}
\newcommand{\Wh}{\operatorname{Wh}}
\newcommand{\TopSpin}{\mathrm{TopSpin}}

\newcommand{\topcat}{\mathrm{top}}
\newcommand{\diffcat}{\mathrm{diff}}
\newcommand{\surg}[1]{\kappa^{#1}_2}

\newcommand{\bpair}[2]{\langle #1,#2\rangle}

\title[Simple homotopy types of smooth four-manifolds]{Simple homotopy types of smooth four-manifolds \\ with finite fundamental group}

\author{Daniel Kasprowski}
\address{School of Mathematical Sciences, University of Southampton, Southampton SO17 1BJ, UK}
\email{d.kasprowski@soton.ac.uk}

\author{John Nicholson}
\address{Max Planck Institute for Mathematics, Vivatsgasse 7, 53111 Bonn, Germany}
\email{math@johnnynicholson.co.uk}

\subjclass[2020]{Primary 57K40, 57R67; Secondary 20J06, 57Q10, 19J25.}

\begin{document}
\begin{abstract}
We construct closed smooth $4$-manifolds with finite fundamental group which are homotopy equivalent but not simple homotopy equivalent, even after taking a connected sum with copies of $S^2\times S^2$. Our explicit examples have remarkably large fundamental groups; in particular, the smallest examples we obtain have fundamental groups of order roughly $10^{10^{42}}$.
\end{abstract}
\maketitle

\section{Introduction}\label{sec:introduction}
A homotopy equivalence $f\colon M\to N$ between closed manifolds is \emph{simple} if its Whitehead torsion $\tau(f)\in\Wh(\pi_1N)$ vanishes, and we call two manifolds simple homotopy equivalent if there exists a simple homotopy equivalence between them. Prominent examples of odd-dimensional manifolds that are homotopy equivalent but not simple homotopy equivalent are given by lens spaces. The even-dimensional existence problem has proved more subtle. Only recently were examples constructed in every even dimension of at least four~\cite{NNP23}. These constructions give topological manifolds in dimension four and smooth manifolds in higher dimensions. In~\cite{KNV25}, we approached the smooth four-dimensional problem using equivalence relations defined up to connected sum with $S^2\times S^2$. Combining \cite[Theorem~C]{KNV25} with computations of Hambleton and Ünlü \cite[Corollary~4.9]{HU26} gives smooth examples in dimension four. The purpose of this article is to obtain smooth examples whose fundamental groups are finite.

\begin{maintheorem}\label{thm:existence}
There is a finite $2$-group $\Gamma$ and closed, oriented, smooth $4$-manifolds $M$ and $N$ with fundamental group $\Gamma$ which are homotopy equivalent but not simple homotopy equivalent.
Furthermore, $M\#k(S^2\times S^2)$ and $N\# k(S^2\times S^2)$ are not simple homotopy equivalent for all $k \ge 0$.
\end{maintheorem}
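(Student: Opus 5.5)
The plan is to follow the stable strategy of \cite{KNV25}. Fix a closed oriented smooth $4$-manifold $M$ with $\pi_1M=\Gamma$, take a unit $u\in\Wh(\Gamma)$, and realize $u$ as the torsion of a homotopy equivalence $f\colon N\to M$ from another smooth manifold $N$. The point is to choose $u$ so that it survives both the ambiguity coming from self-homotopy equivalences of $M$ and stabilization by $S^2\times S^2$. Concretely, the steps are as follows.

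\emph{Step 1: constructing $N$.} Given $u$, take an $h$-cobordism-type construction: the double of a $5$-dimensional $h$-cobordism, or equivalently a smooth handle construction, realizing torsion $u$. This should yield a smooth $N$ and a homotopy equivalence $f\colon N\to M$ with $\tau(f)=u+(-1)^4u^*=u+u^*$ (up to the usual conventions).

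\emph{Step 2: the obstruction.} Consider the set of all $\tau(g)$, where $g$ runs over homotopy equivalences $N\#k(S^2\times S^2)\to M\#k(S^2\times S^2)$. Composing with self-equivalences of $M\#k(S^2\times S^2)$ changes torsion by elements of the group of torsions of stable self-equivalences. Following \cite[Theorem~C]{KNV25}, I expect this group to lie in the subgroup $\{x+x^*\}$ plus the image of $\Out(\Gamma)$-twisted terms, together with a surgery-theoretic indeterminacy landing in a specific subgroup of $\Wh(\Gamma)$. The relevant invariant is then the class of $\tau(f)$ in a quotient of $\widehat H^0(\Z/2;\Wh(\Gamma))$ (or a Tate-type group) by these images, modulo the action of $\Out(\Gamma)$.

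\emph{Step 3: the algebraic input.} Find a finite $2$-group $\Gamma$ and a class in this quotient which is nonzero. Here I would use $2$-groups because the involution on $\Wh(\Gamma)$ and the $SK_1$ parts are computable via Oliver's results. I would also build $\Gamma$ as an iterated extension or wreath-type construction large enough that $\Wh(\Gamma)$ has a norm-type class $u+u^*$ not killed by the indeterminacy; the large order quoted in the abstract suggests exactly such a blow-up, with $\Out(\Gamma)$ controlled. Nonvanishing of the invariant for all $k$ then gives both claims of the theorem, with the case $k=0$ included.

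The hardest part will be Step 3, namely exhibiting an explicit $2$-group where the Tate class is nonzero modulo all indeterminacies, including automorphisms. I would attempt a counting or rank argument: show that the relevant $\F$-vector space grows faster with the group than the contributions from $\Out(\Gamma)$ and from the surgery indeterminacy (controlled by $H_*(\Gamma;\F)$ in low degrees, e.g.\ via $\Sq$ operations). A pigeonhole argument would then produce a surviving class without naming it explicitly.
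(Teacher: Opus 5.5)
There is a genuine gap, and it is in Step~1, so it defeats the strategy itself rather than only leaving Step~3 open. If $N$ is the other end of a smooth $5$-dimensional $h$-cobordism $W$ starting at $M$, which is what your construction amounts to, then $M$ and $N$ are stably diffeomorphic. To see this, note that a normal $1$-smoothing of $M$ extends over $W\simeq M$ and restricts to a normal $1$-smoothing of $N$. So $M$ and $N$ are bordant over a common normal $1$-type and have the same Euler characteristic, and Kreck's theorem gives $M\#k(S^2\times S^2)\cong N\#k(S^2\times S^2)$ for some $k$. This is the standard fact that $h$-cobordant $4$-manifolds are stably diffeomorphic. Diffeomorphisms are simple, so the clause \emph{for all $k\ge0$} fails for every example produced this way.

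Your own bookkeeping already points to this. In Step~2 you expect torsions of the form $x+x^*$ to be part of the indeterminacy, and that is exactly the form of $\tau(f)$ in Step~1. Moreover, elements $u+u^*$ are zero in $\widehat H^0(\Z/2;\Wh(\Gamma))$ by definition, so no norm-type class can be nonzero there. Steps~2 and~3 are in any case only expectations: the indeterminacy is never identified, and the proposed pigeonhole argument has nothing concrete to count.

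The missing idea is that manifolds which are stably homotopy equivalent but not stably simple homotopy equivalent must fail to be stably diffeomorphic. They therefore have to be distinguished in $\Omega_4(\xi)$ of their normal $1$-type, not produced by an $h$-cobordism. The paper works at that level. By \cite[Theorem~A]{KNV25}, packaged as Proposition~\ref{prop:criterion}, it suffices to find $w\in H^2(\Gamma;\F)$ and $z\in\ker\surg{h}(\Gamma)$ such that:
\begin{enumerate}
\item $\bpair{w}{z}=0$, which makes the Kirby--Siebenmann invariant vanish so the manifolds are smoothable;
\item $\rho_w(z)\notin\rho_w(\ker\surg{s}(\Gamma))$ in $E^\infty_{2,2}(\Gamma,w)$ of the James spectral sequence.
\end{enumerate}
The algebraic input is Hambleton--\"Unl\"u's computation $\surg{h}(G_0)=0\neq\surg{s}(G_0)$ for $G_0=\operatorname{SmallGroup}(128,1377)$, not a Whitehead group calculation. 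The huge $2$-group $\Gamma(G_0)$ is a finite quotient of $G_0*G_0$ with free kernels. It is chosen via Lemma~\ref{lem:finite-quotient} so that the differentials $d^2_{4,1}$ and $d^3_{5,0}$ do not kill the functional $\bpair{\lambda_1}{-}$ (Proposition~\ref{prop:descent}). That functional then separates $\rho_w(z)$ from $\rho_w(\ker\surg{s}(\Gamma))$.
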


While the group $\Gamma$ in Theorem~\ref{thm:existence} is explicit, it has order $2^{\,16143(2+63\cdot2^{16137})}$. More precisely, it is obtained from the group $G_0=\operatorname{SmallGroup}(128,1377)$, i.e.\ the $1377$th group of order 128 in the GAP Small Groups Library, by the following construction. 
Let $G$ be a finite nontrivial group, let $\Pi=G*G$, and define subgroups successively by
\begin{equation}\label{eq:tower-intro}
	\begin{aligned}
		K_0&=\ker\bigl(\Pi\longrightarrow G\times G\bigr),&
		K_1&=[K_0,K_0]K_0^2,\\
		n&=|\Pi/K_1|,&
		K_2&=[K_1,K_1]K_1^n.
	\end{aligned}
\end{equation}
Here $K^m$ denotes the subgroup generated by the $m$th powers in $K$. Set $
	\Gamma(G):=\Pi/K_2$.
Then the group $\Gamma$ in Theorem~\ref{thm:existence} is $\Gamma(G_0)$. If $G$ is a $2$-group, then $\Gamma(G)$ is a $2$-group. More precisely, writing
\[
g=|G|,\qquad r_0=(g-1)^2,\qquad
n=g^2 2^{r_0},\qquad r_1=1+2^{r_0}(r_0-1),
\]
one has $|\Gamma(G)|=n^{r_1+1}$ by Lemma~\ref{lem:order}. In Remark~\ref{rem:smaller}, we obtain a substantially smaller example whose order is still roughly $10^{10^{42}}$, as mentioned in the abstract.

We now describe the proof idea of Theorem \ref{thm:existence}. 
For a group $G$, let
\[
\surg{s}(G)\colon H_2(G;\F)\longrightarrow L_4^s(\Z G),
\qquad
\surg{h}(G)\colon H_2(G;\F)\longrightarrow L_4^h(\Z G)
\]
be the degree-two components of the surgery assembly maps. Since $\surg{h}(G)$ is the composition of $\surg{s}(G)$ with the forgetful map $L_4^s(\Z G)\to L_4^h(\Z G)$ we have $\ker\surg{s}(G)\subseteq\ker\surg{h}(G)$.
We will not need the maps $\surg{h}$ and $\surg{s}$ explicitly, and refer to Hambleton--Milgram--Taylor--Williams~\cite{HMTW88} and Davis~\cite{Dav05} for more information. Hambleton--\"Unl\"u~\cite[Corollary~4.9]{HU26} prove that $G_0$ satisfies $\surg{h}(G_0)=0$ while $\surg{s}(G_0)\ne0$. In particular, $\ker\surg{s}(G_0)\subsetneq\ker\surg{h}(G_0)$. Thus Theorem~\ref{thm:existence} is implied by the following.

\begin{maintheorem}\label{thm:general}
Suppose that $G$ is finite and $\ker\surg{s}(G)\subsetneq\ker\surg{h}(G)$. Then $\Gamma(G)$ is finite and there are closed, oriented, smooth $4$-manifolds $M,N$ with fundamental group $\Gamma(G)$ which are homotopy equivalent but $M\#k(S^2\times S^2)$ and $N\# k(S^2\times S^2)$ are not simple homotopy equivalent for all $k \ge 0$.
\end{maintheorem}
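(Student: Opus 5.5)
We will deduce Theorem~\ref{thm:general} from the infinite-group result \cite[Theorem~C]{KNV25}, applied to the free product $\Pi=G*G$, and then pass to the finite quotient $\Gamma(G)=\Pi/K_2$. The construction in \cite{KNV25} uses a class $x\in\ker\surg{h}(G)\setminus\ker\surg{s}(G)$ to build closed smooth $4$-manifolds $M',N'$ with fundamental group $\Pi$ (or a related group built from two copies of $G$). These manifolds are homotopy equivalent, and stably (after $\#k(S^2\times S^2)$) they differ by a homotopy equivalence whose Whitehead torsion is a nonzero class. The obstruction to simple homotopy equivalence is detected in $\Wh(\Pi)$ modulo the image of the relevant indeterminacy (self-equivalences, the involution, and the norm-type subgroup). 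Finiteness of $\Gamma(G)$ is routine: $\Pi/K_0=G\times G$ is finite, $K_0$ is free of finite rank since it is a finite-index subgroup of $G*G$ acting freely on the Bass--Serre tree, and $K_0/K_1$ and $K_1/K_2$ are finite elementary or exponent-$n$ abelian groups. So $\Gamma(G)$ is finite, with order as in Lemma~\ref{lem:order}.

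Next, we realise $M,N$ directly with fundamental group $\Gamma(G)$. Here we do surgery on circles in $M'$ and $N'$ representing normal generators of $K_2$ (in fact $\Gamma(G)$ is finitely presented, so finitely many suffice). We choose the circles compatibly under a homotopy equivalence $f\colon M'\to N'$, so that $f$ induces a homotopy equivalence $M\to N$ after stabilising by $S^2\times S^2$'s. The extra $S^2\times S^2$ summands are harmless because the conclusion is stable anyway, and the homotopy equivalence can be arranged via the usual trick of doing the same surgeries on homotopic embedded circles. Alternatively, we apply the $\Gamma(G)$-version of the construction in \cite{KNV25} directly. Its input is the class $x$ pushed forward along $G\to\Gamma(G)$ via the two free factors. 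In either case, the torsion of the resulting equivalence is the image of the original torsion under $\Wh(\Pi)\to\Wh(\Gamma(G))$.

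The main obstacle is showing that this torsion class survives in $\Wh(\Gamma(G))$ modulo the indeterminacy, for every $k$. The tower \eqref{eq:tower-intro} is designed for exactly this. Each quotient $K_0/K_1$ and $K_1/K_2$ is a characteristic abelian layer. So automorphisms and homotopy self-equivalences of the manifolds, which act on $\Gamma(G)$ through $\Out$, can be controlled: every automorphism of $\Gamma(G)$ preserves the two free factors' images up to conjugation and swap, by rigidity inherited from $G*G$. The plan is as follows. First, identify the relevant obstruction group as a quotient of $\Wh$ of the form $\Wh(\Gamma(G))/\{\tau+\bar\tau\text{-type and }\Out\text{-induced classes}\}$. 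Then show that the distinguishing element of \cite{KNV25}, which lives in the difference between the $L^s$ and $L^h$ images of $H_2(G;\F)$, maps nontrivially. We would try to detect it via the Rothenberg sequence: the class $\surg{s}(x)\neq0$ with $\surg{h}(x)=0$ gives an element of $\hat H^{5}(\Z/2;\Wh(G))$, which must be shown to inject into $\hat H(\Z/2;\Wh(\Gamma(G)))$. Injectivity would follow if $G\times G$ (or $G$) is a retract of $\Gamma(G)$ compatible with the involution, since retractions induce split injections on Tate cohomology. Indeed $\Gamma(G)\to G\times G\to G$ retracts each factor. The exponent-$n$ layer with $n=|\Pi/K_1|$ is presumably needed so that transfer/restriction arguments on the abelian kernel kill the remaining indeterminacy coming from self-equivalences. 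Verifying that compatibility is where we expect most of the work.
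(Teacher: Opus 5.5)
There is a genuine gap in the main step. Your finiteness argument is fine, but the obstruction does not live in $\Wh(\Gamma(G))$ modulo ``$\tau+\bar\tau$ and $\Out$''. By \cite[Theorem~A]{KNV25}, as packaged in Proposition~\ref{prop:criterion}, what must be shown is $\rho_w(z)\notin\rho_w(\ker\surg{s}(\Gamma))$ in $E^\infty_{2,2}(\Gamma,w)$. Equivalently, $z\notin\ker\surg{s}(\Gamma)+\ker\rho_w$, where $\ker\rho_w$ is generated by the images of the incoming differentials $d^2_{4,1}$ and $d^3_{5,0}$ of the James spectral sequence. Whether some stable homotopy equivalence $M\#k(S^2\times S^2)\to N\#k(S^2\times S^2)$ has vanishing torsion depends on this subgroup, so the indeterminacy you propose to quotient by is the wrong one. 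Your retraction argument via $\Gamma(G)\to G$ only shows that the Rothenberg/Tate class of $\surg{s}(G)(x)$ survives, i.e.\ $z\notin\ker\surg{s}(\Gamma)$. It says nothing about $\ker\rho_w$. A warning sign is that your reasoning would apply verbatim with fundamental group $G$ or $G\times G$, which would make the tower~\eqref{eq:tower-intro} pointless.

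The retraction cannot control $\ker\rho_w$. The only class known to descend to $E^\infty_{2,2}(G,\lambda)$ is $\lambda$ itself (Lemma~\ref{lem:characteristic}), but smoothness forces $\bpair{w}{z}=0$. So one must take $w=\lambda_1+\lambda_2$, which is not pulled back along $p_1$. On $G\times G$ this gives $\Sq^2_w\lambda_1=\lambda_1\lambda_2\neq0$ by K\"unneth, so by Proposition~\ref{prop:Delta} evaluation by $\lambda_1$ does not factor through $\rho_w$ there.

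The missing idea is Proposition~\ref{prop:descent}, which kills the two obstructions in turn:
\begin{enumerate}
\item The class $\lambda_1\lambda_2$ vanishes on $\Pi$, hence on $Q_1$ by Lemma~\ref{lem:finite-quotient}(i). This is the role of the mod-$2$ layer.
\item The $d^3$-obstruction $\Delta_{Q_1,w}(\lambda_1)$ has a representative $o'$ with $r^*o'=0$. This uses Lemma~\ref{lem:free-product} and surjectivity of $H^3(Q_1;\F)\to H^3(\Pi;\F)$. Then $o'$ dies on $\Gamma$ by Lemma~\ref{lem:finite-quotient}(ii), because $n=|Q_1|$ annihilates $H^{>0}(Q_1;-)$.
\end{enumerate}
So the exponent-$n$ layer kills a $d^3$-obstruction; it has nothing to do with self-equivalence indeterminacy.

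Two of your other steps are also unsupported. First, you assume that circle surgeries on $M',N'$ preserve the homotopy equivalence, with torsion pushed forward. Second, you assume that automorphisms of the finite group $\Gamma(G)$ respect the factors, but they need not lift to $G*G$. Neither is needed: $M,N$ come directly from Proposition~\ref{prop:criterion}, and $\Out_w$ is handled by the $\Out_w$-invariance of $\ker\surg{s}(\Gamma)$.
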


The proof of Theorem~\ref{thm:general} is similar to that of \cite[Theorem~C]{KNV25}. By \cite[Theorem~A]{KNV25}, one needs a normal $1$-type $\xi_G$ with fundamental group $G$ such that the images of $\ker\surg{s}(G)$ and $\ker\surg{h}(G)$ in the corresponding $E_{2,2}^\infty$-term of the James spectral sequence differ. For this, some control over the differentials in the James spectral sequence is required.  \cite[Theorem~C]{KNV25} shows that this holds for the free product $G*G$ provided $G$ satisfies $\ker\surg{s}(G)\subsetneq\ker\surg{h}(G)$. The group $\Gamma(G)$ is close enough to $G*G$ to still have this property, while it is also close enough to $G\times G$ to be finite. 

\subsection*{Organisation and conventions}
Section~\ref{sec:bordism} recalls the relevant information on the James spectral sequence and the results from \cite{KNV25} that are used in this paper. Section~\ref{sec:finite-quotient} establishes results about the differentials in the James spectral sequence for the group $\Gamma(G)$. These are used in Section~\ref{sec:proof} to prove Theorem~\ref{thm:general}.
All manifolds are closed, connected and oriented unless stated otherwise. 

\subsection*{Acknowledgments}
We thank Ian Hambleton and Simona Vesel\'a for useful discussions. JN was supported by the Max Planck Institute for Mathematics.

\subsection*{Declaration of AI usage}
This project began in September 2025, and its overall strategy was developed without the use of AI. We later used ChatGPT (GPT-6 Astra) to generate new ideas for overcoming computational difficulties.
This led to an initial draft of this article containing Lemma~\ref{lem:finite-quotient}.
This result is central in the proof of Proposition~\ref{prop:descent} which in turn is one of the two main ingredients in the proof of Theorem~\ref{thm:general}. The current version has been extensively rewritten and is the responsibility of the authors.

\section{Background}\label{sec:bordism}

\subsection{Normal 1-types and the tertiary invariant}
Let $G$ be a finitely presented group and $w\in H^2(G;\F)$. The normal $1$-type $\xi^{\topcat}(G,w)$ is obtained by the homotopy pullback
\begin{equation}\label{eq:normal-pullback}
\begin{tikzcd}[column sep=large,row sep=large]
 B^{\topcat}_{G,w} \arrow[r] \arrow[d,"{\xi^{\topcat}(G,w)}"']
 & BG \arrow[d,"w"]\\
 B\mathrm{STop} \arrow[r,"w_2"'] & K(\F,2).
\end{tikzcd}
\end{equation}
Replacing $B\mathrm{STop}$ by $B\mathrm{SO}$ defines $\xi^{\diffcat}(G,w)$. We assume that $\xi^{\topcat}(G,w)$ and $\xi^{\diffcat}(G,w)$ are fibrations. A normal $1$-smoothing of a manifold is a $2$-connected lift of its stable normal bundle along this fibration. In particular, it specifies an identification of its fundamental group with $G$.

The James spectral sequence~\cite[Theorem~3.1.1]{Tei92} has the form
\begin{equation}\label{eq:james}
E^2_{p,q}(G,w)=H_p(G;\Omega_q^{\TopSpin})
\ \Longrightarrow\ \Omega_{p+q}(\xi^{\topcat}(G,w)).
\end{equation}
In the degrees needed below,
\begin{equation}\label{eq:low-bordism}
\Omega_0^{\TopSpin}\cong\Z,\qquad
\Omega_1^{\TopSpin}=\Omega_2^{\TopSpin}=\F,\qquad
\Omega_3^{\TopSpin}=0.
\end{equation}
The same coefficients occur in smooth spin bordism in degrees at most three. In total degree four the filtration induced by the spectral sequence is
\[
F_{0,4}\subseteq F_{2,2}\subseteq F_{3,1}\subseteq F_{4,0}
=\Omega_4(\xi^{\topcat}(G,w)),
\]
and $F_{2,2}/F_{0,4}=E^\infty_{2,2}(G,w)$. For a class $[f\colon M\to B^{\topcat}_{G,w}]$ in $F_{2,2}$, its image in this quotient is its \emph{tertiary invariant} denoted by $\ter(M)$. Its primary and secondary invariants are, by definition, zero. Note that $\ter(M)$ depends on the choice of $f$ but its image in $E^\infty_{2,2}(G,w)/\Out_w(G)$ is independent of $f$ by~\cite[Corollary~3.1.2]{Tei92}. Here, $\Out_w(G)$ denotes the group of outer automorphisms of $G$ fixing $w$.

\subsection{Differentials in the James spectral sequence}
We consider the differentials relevant for the $(2,2)$-term. No nonzero differential leaves the $(2,2)$-term and the only possible incoming differentials are
\begin{equation}\label{eq:D}
D_{G,w}:=d^2_{4,1}\colon H_4(G;\F)\longrightarrow H_2(G;\F)
\end{equation}
and
\begin{equation}\label{eq:d3}
d^3_{5,0}\colon\ker D'_{G,w}\longrightarrow
H_2(G;\F)/\im D_{G,w},
\qquad
D'_{G,w}:=d^2_{5,0}\colon H_5(G;\Z)\longrightarrow H_3(G;\F).
\end{equation}
 Consequently there is a canonical epimorphism
\begin{equation}\label{eq:rho}
\rho_w\colon H_2(G;\F)\longrightarrow E^\infty_{2,2}(G,w).
\end{equation}

Define
\[
\Sq^2_w=\Sq^2+w\smile-.
\]
The formula for the $d_2$-differential from~\cite[Theorem~3.1.3]{Tei92} gives for all $\alpha\in H^2(G;\F), \beta\in H^3(G;\F), c\in H_4(G;\F)$ and $d\in H_5(G;\Z)$ that
\begin{equation}\label{eq:d2-pairings}
	\bpair{\alpha}{D_{G,w}(c)}=\bpair{\Sq^2_w\alpha}{c} \text{ and }
	\bpair{\beta}{D'_{G,w}(d)}=\bpair{\Sq^2_w\beta}{r_2d},
\end{equation}
where $r_2$ is reduction modulo two.

Let $j\colon\F\to\T$ be the injective homomorphism with $j(1)=1/2$. Since $\T$ is an injective abelian group, the universal coefficient theorem gives a natural identification $H^5(G;\T)\cong\Hom(H_5(G;\Z),\T)$. 
The universal coefficient theorem also gives a natural identification
\[
H^3(G;\F)\cong \Hom(H_3(G;\F),\F)\cong \Hom(H_3(G;\F),\T),
\]
where the second isomorphism is given by post-composition with $j$. In these identifications,
\begin{equation}\label{eq:dual-Dprime}
	\Hom(D'_{G,w},\T)=j_*\Sq^2_w\colon H^3(G;\F)\longrightarrow H^5(G;\T).
\end{equation}
Indeed, $\Hom(D'_{G,w},\T)$ sends $\beta\in H^3(G;\F)$ to $j_*\langle \Sq_w^2\beta,r_2(-)\rangle$ by \eqref{eq:d2-pairings} which agrees with the evaluation of $j_*\Sq_w^2\beta$.

\begin{lemma}\label{lem:character-quotient}
	Restriction to $\ker D'_{G,w}$ induces a natural isomorphism
	\begin{equation*}
		\frac{H^5(G;\T)}{j_*\Sq^2_w\bigl(H^3(G;\F)\bigr)}
		\ \cong\ \Hom(\ker D'_{G,w},\T).
	\end{equation*}
\end{lemma}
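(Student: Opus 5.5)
We apply the exact functor $\Hom(-,\T)$ to the left exact sequence
\[
0\longrightarrow \ker D'_{G,w}\longrightarrow H_5(G;\Z)\xrightarrow{\;D'_{G,w}\;} H_3(G;\F).
\]
Since $\T$ is an injective abelian group, $\Hom(-,\T)$ is exact. Applying it gives the exact sequence
\[
\Hom(H_3(G;\F),\T)\xrightarrow{\Hom(D'_{G,w},\T)}\Hom(H_5(G;\Z),\T)\longrightarrow\Hom(\ker D'_{G,w},\T)\longrightarrow 0.
\]
To justify this, split $D'_{G,w}$ as the surjection $H_5(G;\Z)\to\im D'_{G,w}$ followed by the inclusion $\im D'_{G,w}\hookrightarrow H_3(G;\F)$. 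Injectivity of $\T$ says precisely that every homomorphism $\im D'_{G,w}\to\T$ extends to $H_3(G;\F)$. Hence the image of $\Hom(D'_{G,w},\T)$ equals the image of $\Hom(\im D'_{G,w},\T)\to\Hom(H_5(G;\Z),\T)$. By exactness applied to the short exact sequence $0\to\ker D'_{G,w}\to H_5(G;\Z)\to\im D'_{G,w}\to0$, this image is exactly the kernel of the restriction map to $\Hom(\ker D'_{G,w},\T)$. The same short exact sequence also gives surjectivity of the restriction map. Thus restriction induces an isomorphism
\[
\Hom(H_5(G;\Z),\T)/\im\Hom(D'_{G,w},\T)\;\cong\;\Hom(\ker D'_{G,w},\T).
\]

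Next we transport this along the identifications fixed just before the lemma. Write $H^5(G;\T)\cong\Hom(H_5(G;\Z),\T)$ for the universal coefficient isomorphism; it is one since $\operatorname{Ext}(H_4,\T)=0$. Write $H^3(G;\F)\cong\Hom(H_3(G;\F),\T)$ for the identification obtained by composing with $j$. By \eqref{eq:dual-Dprime}, under these identifications $\Hom(D'_{G,w},\T)$ becomes $j_*\Sq^2_w$. Its image therefore corresponds to $j_*\Sq^2_w(H^3(G;\F))\subseteq H^5(G;\T)$, and the isomorphism above becomes the claimed one. The map is visibly given by evaluating a class in $H^5(G;\T)$ on elements of $\ker D'_{G,w}$, i.e.\ by restriction.

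For naturality we take maps of pairs $(G,w)$, meaning homomorphisms $\varphi\colon G'\to G$ with $\varphi^*w=w'$. Under such a map, the universal coefficient isomorphisms are natural and $\Sq^2_w$ commutes with $\varphi^*$. Moreover $D'$ is natural in homology by \eqref{eq:d2-pairings}, so $\varphi_*$ carries $\ker D'_{G',w'}$ into $\ker D'_{G,w}$. The restriction isomorphism is therefore compatible with the induced maps.

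The only point needing care is the exactness step. There we must use that the image of the dual of $D'_{G,w}$ equals the annihilator of $\ker D'_{G,w}$, which requires injectivity of $\T$ to extend characters from $\im D'_{G,w}$ to all of $H_3(G;\F)$. Everything else is bookkeeping with \eqref{eq:dual-Dprime}.
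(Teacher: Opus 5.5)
Your proof is correct and follows the paper's argument: dualise $0\to\ker D'_{G,w}\to H_5(G;\Z)\to\im D'_{G,w}\to 0$ using injectivity of $\T$, use injectivity again to identify $\Hom(\im D'_{G,w},\T)$ with the image of $\Hom(D'_{G,w},\T)$, and then transport along the identification of that dual with $j_*\Sq^2_w$. Your explicit naturality check, via the pairing formula for $D'_{G,w}$ and naturality of $\Sq^2_w$, is a sound addition that the paper leaves implicit.
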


\begin{proof}
	Since $\T$ is injective, the short exact sequence $0\to \ker D'_{G,w}\to H_5(G;\Z)\to \im D'_{G,w}\to 0$ yields a short exact sequence
	\[0\to \Hom(\im D'_{G,w},\T)\to \Hom(H_5(G;\Z),\T)\to \Hom(\ker D'_{G,w},\T)\to 0.\]
	Again using injectivity of $\T$, the subgroup $\Hom(\im D'_{G,w},\T)$ agrees with the image of $\Hom(D'_{G,w},\T)$ which by \eqref{eq:dual-Dprime} is the same as the image of $j_*\Sq^2_w$. Thus the restriction map $\Hom(H_5(G;\Z),\T)\to \Hom(\ker D'_{G,w},\T)$ agrees with the quotient map $H^5(G;\T)\to H^5(G;\T)/j_*\Sq^2_w\bigl(H^3(G;\F)\bigr)$ as claimed.
\end{proof}

\begin{definition}\label{def:Delta}
	For $\alpha\in H^2(G;\F)$ with $\Sq^2_w\alpha=0$, the evaluation map $\langle\alpha,-\rangle$ factors through $H_2(G;\F)/\im D_{G,w}$ by \eqref{eq:d2-pairings}, and we denote the induced map by $\overline\alpha\colon H_2(G;\F)/\im D_{G,w}\to\F$.
	Define
	\begin{equation*}
		\Delta_{G,w}(\alpha)\in
		\frac{H^5(G;\T)}{j_*\Sq^2_w\bigl(H^3(G;\F)\bigr)}
	\end{equation*}
	to be the class which corresponds to
	\[
	\ker D'_{G,w}\xrightarrow{d^3_{5,0}}
	E^3_{2,2}(G,w)\xrightarrow{~\overline\alpha~}\F
	\xrightarrow{~j~}\T
	\]
	by Lemma~\ref{lem:character-quotient}.
\end{definition}

We will use the following proposition to detect elements in $E_{2,2}^\infty(G,w)$.

\begin{proposition}\label{prop:Delta}
	For $\alpha\in H^2(G;\F)$, the following are equivalent:
	\begin{enumerate}
		\item evaluation by $\alpha$ factors through $\rho_w$;
		\item $\Sq^2_w\alpha=0$ and $\Delta_{G,w}(\alpha)=0$.
	\end{enumerate}
	Moreover, if $\Sq^2_w\alpha=0$, then for every homomorphism $f\colon G'\to G$ and $w':=f^*w$, we have
	\begin{equation}\label{eq:Delta-natural}
		f^*\Delta_{G,w}(\alpha)=\Delta_{G',w'}(f^*\alpha).
	\end{equation}
\end{proposition}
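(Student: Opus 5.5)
The equivalence comes from unwinding how $E^\infty_{2,2}$ is built as a quotient of $H_2(G;\F)$; naturality then follows from naturality of the James spectral sequence.

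\emph{The equivalence.} Since no differential leaves the $(2,2)$-term, $E^\infty_{2,2}(G,w)=E^4_{2,2}(G,w)$. This group is the quotient of $E^3_{2,2}=H_2(G;\F)/\im D_{G,w}$ by $\im d^3_{5,0}$, where $d^3_{5,0}$ is defined on $E^3_{5,0}=\ker D'_{G,w}$. So $\rho_w$ is the composite of two surjections $H_2(G;\F)\to E^3_{2,2}\to E^4_{2,2}$. Evaluation by $\alpha$, as a map $H_2(G;\F)\to\F$, factors through $\rho_w$ if and only if the following two conditions hold.
\begin{enumerate}
\item $\langle\alpha,-\rangle$ vanishes on $\im D_{G,w}$. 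By \eqref{eq:d2-pairings} this says $\langle\Sq^2_w\alpha,c\rangle=0$ for all $c\in H_4(G;\F)$. Since $H^4(G;\F)\cong\Hom(H_4(G;\F),\F)$ by universal coefficients over the field $\F$, this is equivalent to $\Sq^2_w\alpha=0$.
\item The induced map $\overline\alpha$ vanishes on $\im d^3_{5,0}$, i.e.\ $\overline\alpha\circ d^3_{5,0}=0$. Since $j$ is injective, this holds if and only if $j\circ\overline\alpha\circ d^3_{5,0}=0$ in $\Hom(\ker D'_{G,w},\T)$. By Lemma~\ref{lem:character-quotient} and Definition~\ref{def:Delta}, that is exactly $\Delta_{G,w}(\alpha)=0$.
\end{enumerate}

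\emph{Naturality.} Take $f\colon G'\to G$ and $w'=f^*w$. The map $Bf$ lifts to a map of normal $1$-types $B_{G',w'}\to B_{G,w}$, since the pullback squares \eqref{eq:normal-pullback} are compatible ($w'$ is the pullback of $w$). This gives a morphism of James spectral sequences which is $f_*$ on $E^2$-terms. Hence $f_*$ commutes with $D$, $D'$ and $d^3_{5,0}$; in particular $f_*(\ker D'_{G',w'})\subseteq\ker D'_{G,w}$, and $f_*$ induces a map on $E^3_{2,2}$. Since $\Sq^2_{w'}f^*\alpha=f^*\Sq^2_w\alpha=0$, the class $\Delta_{G',w'}(f^*\alpha)$ is defined, and $f^*$ carries $j_*\Sq^2_w(H^3(G;\F))$ into $j_*\Sq^2_{w'}(H^3(G';\F))$, so $f^*$ is well defined on the quotients. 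We have $\overline{f^*\alpha}=\overline\alpha\circ f_*$ on $E^3_{2,2}$, and therefore
\[
j\,\overline{f^*\alpha}\,d^3_{5,0}=j\,\overline\alpha\,d^3_{5,0}\circ f_*|_{\ker D'_{G',w'}}.
\]
Finally, the isomorphism of Lemma~\ref{lem:character-quotient} is natural: restriction commutes with precomposition by $f_*$, and the universal coefficient identification $H^5(-;\T)\cong\Hom(H_5(-;\Z),\T)$ is natural. Together these give \eqref{eq:Delta-natural}.

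The main obstacle is justifying that the induced map of spectral sequences is really $f_*$ on the relevant $E^2$-terms. The coefficient system is untwisted because we are in the oriented spin setting, so functoriality of the Atiyah–Hirzebruch/James spectral sequence for maps of fibrations over $BG'\to BG$ applies. Beyond that, the argument is bookkeeping.
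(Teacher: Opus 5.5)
Your proposal is correct and takes essentially the same route as the paper. The paper likewise identifies vanishing on $\im D_{G,w}$ with $\Sq^2_w\alpha=0$ via \eqref{eq:d2-pairings}, identifies the remaining condition with $\Delta_{G,w}(\alpha)=0$ using injectivity of $j$, and deduces \eqref{eq:Delta-natural} from naturality of $\Sq^2$ and of the James spectral sequence. What you add is the bookkeeping the paper leaves implicit: that $f^*$ respects the quotients by $j_*\Sq^2_w$, and that the restriction isomorphism of Lemma~\ref{lem:character-quotient} is natural.
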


\begin{proof}
	By \eqref{eq:d2-pairings}, evaluation by $\alpha$ factors through $H_2(G;\F)/\im D_{G,w}$ if and only if $\Sq^2_w\alpha=0$. As in Definition~\ref{def:Delta}, we denote the induced map by $\overline\alpha$.
	
	In this case, $\Delta_{G,w}(\alpha)=0$ if and only if $\overline\alpha$ factors through $E_{2,2}^\infty(G,w)$ since $j$ is injective. This proves the equivalence.
	
	By naturality of $\Sq^2$, $\Sq^2_{w'}(f^*\alpha)=f^*\Sq^2_{w}(\alpha)=0$ and hence $\Delta_{G',w'}(f^*\alpha)$ is defined. The equality \eqref{eq:Delta-natural} now follows from the naturality of the James spectral sequence.
\end{proof}

\subsection{Detecting (simple) homotopy equivalences up to stabilisation}
In this section we recall the results from \cite{KNV25} that we will use to show the existence of homotopy equivalent but not simple homotopy equivalent manifolds. The next lemma will be used to obtain smooth manifolds.

\begin{lemma}[{\cite[Lemma~3.4]{KNV25}}]\label{lem:characteristic}
The map $w\colon H_2(G;\F)\to\F$ factors through $\rho_w$. If $[f\colon M\to B^{\topcat}_{G,w}]$ belongs to $F_{2,2}$, then the signature $\sigma(M)$ of $M$ is divisible by eight and
\begin{equation}\label{eq:KS}
\ks(M)\equiv \sigma(M)/8+\bpair{w}{\ter(M)}\pmod2.
\end{equation}
\end{lemma}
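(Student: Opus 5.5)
Since this lemma is quoted from \cite[Lemma~3.4]{KNV25}, I would reconstruct its proof in two parts: the factorisation of $w$ through $\rho_w$, and the formula \eqref{eq:KS}.

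\textbf{Factorisation.} I would apply Proposition~\ref{prop:Delta} with $\alpha=w$. The first condition is $\Sq^2_w w=\Sq^2 w+w\smile w=w^2+w^2=0$, which holds because $\Sq^2$ is the cup square on degree-two classes. The second condition is $\Delta_{G,w}(w)=0$. For this I would use naturality \eqref{eq:Delta-natural} along the classifying map $w\colon G\to K(\F,2)$. This is a map of spaces rather than of groups, so strictly one needs the spectral-sequence version of naturality. Then $w$ is the pullback of the universal class $\iota$, and $\Delta_{G,w}(w)=w^*\Delta_{K(\F,2),\iota}(\iota)$.

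In the universal case the normal $1$-type is $B\mathrm{STop}$ itself, since the pullback of $w_2$ along the identity is everything. The evaluation against $\iota$ on $E^\infty_{2,2}$ can then be realised geometrically. On the $F_{2,2}$ part of $\Omega_4(B\mathrm{STop})$, it should be given by a combination of $\ks$ and $\sigma/8$, and in particular it is well defined on bordism classes. So it factors through $\rho$, and $\Delta(\iota)=0$.

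The main obstacle is making this universal computation precise. One needs that $H_5(K(\F,2);\Z)$, or rather $\ker D'$, maps under $d^3$ to something killed by $\iota$. I would first try to verify this directly via the known low-degree homology of $K(\F,2)$ together with $\Omega_4^{\mathrm{STop}}\cong\Z\oplus\F$. Alternatively, I would argue that $\iota$ extends to a bordism invariant, which forces the factorisation.

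\textbf{Formula.} Given $[f]\in F_{2,2}$, the primary and secondary invariants vanish. By naturality I would push $[f]$ to $\Omega_4(B\mathrm{STop})$ and compare with $\Omega_4^{\TopSpin}$. The edge there is $F_{0,4}$, which is the image of $\Omega_4^{\TopSpin}$. On $\Omega_4^{\TopSpin}$ (generated by $E_8$ and $K3$ representatives) the signature is divisible by $8$, and $\ks=\sigma/8 \bmod 2$ by Rochlin/Freedman.

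Both $\ks$ and $\sigma$ are bordism invariants, and $\sigma$ is additive. So I would write $[f]$ modulo $F_{0,4}$ via its tertiary image in $H_2(K(\F,2);\F)$. Here the generator is detected by $\iota$ and realised by a nonspin manifold such as $*\mathbb{CP}^2$, or by $\mathbb{CP}^2\#\overline{*\mathbb{CP}^2}$ with $\sigma=0$ and $\ks=1$. Checking the formula on that single generator, together with the spin generators, gives \eqref{eq:KS} by additivity. Divisibility of $\sigma$ by eight also follows from the generator computation, since the signature of the $F_{2,2}$ generator is $0$ modulo $8$.
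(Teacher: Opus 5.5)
The skeleton of your proposal is sound. The square \eqref{eq:normal-pullback} is a map of fibrations covering $w\colon BG\to K(\F,2)$ into the universal fibration $B\mathrm{STop}\to K(\F,2)$ with fibre $B\TopSpin$, so everything reduces to the universal James spectral sequence. There is still a genuine gap: the one computation that carries the content of the lemma, the structure of that universal spectral sequence in total degree four, is exactly what you leave open, and what you put in its place does not work.

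\emph{The factorisation.} You argue that on $F_{2,2}$ evaluation by $\iota$ ``should be given by a combination of $\ks$ and $\sigma/8$''. That is the formula \eqref{eq:KS} you intend to derive afterwards, so the argument is circular.

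\emph{The formula.} $*\mathbb{CP}^2$ cannot represent anything in $F_{2,2}$. It has $\sigma=1$, so if it lay in $F_{2,2}$ it would contradict the divisibility by eight you are proving. In fact it is not even in $F_{3,1}$; see below. $\mathbb{CP}^2\#\overline{*\mathbb{CP}^2}$ is a legitimate choice, but you give no reason why it lies in $F_{2,2}$ rather than merely in $F_{4,0}$. Nor do you show that $F_{2,2}/F_{0,4}$ is generated by a single class. Without these, ``checking on generators'' and ``divisibility follows from the generator'' have nothing to stand on.

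The missing step is a short order count.
\begin{enumerate}
\item $F_{0,4}$ is the image of $\Omega_4^{\TopSpin}\cong\Z$, generated by the $E_8$-manifold with $(\sigma,\ks)=(8,1)$.
\item Since $(\sigma,\ks)\colon\Omega_4^{\mathrm{STop}}\to\Z\oplus\F$ is an isomorphism, $\Omega_4^{\mathrm{STop}}/F_{0,4}$ is cyclic of order $16$, generated by $[\mathbb{CP}^2]$.
\item The remaining $E^2$-terms of total degree four are $E^2_{1,3}=0$, $H_2(K(\F,2);\F)\cong\F$, $H_3(K(\F,2);\F)\cong\F$ and $H_4(K(\F,2);\Z)\cong\Z/4$. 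Their total order is $16$, so all of them survive to $E^\infty$. In particular no differential hits $E_{2,2}$, and $\rho_\iota$ is an isomorphism.
\item The filtration of $\Z/16$ is then forced. $F_{3,1}$ consists of the classes with $4\mid\sigma$, and $F_{2,2}/F_{0,4}$ is the subgroup of order two, generated by $8[\mathbb{CP}^2]$. Hence $F_{2,2}=\{[M]:8\mid\sigma(M)\}$.
\item On $F_{2,2}$, the map $[M]\mapsto\ks(M)+\sigma(M)/8$ is a character whose kernel is exactly $F_{0,4}$. It therefore equals $\langle\iota,\ter(-)\rangle$.
\end{enumerate}

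Naturality along $B^{\topcat}_{G,w}\to B\mathrm{STop}$ then gives both parts of the lemma at once. This map sends $F_{2,2}$ into $F_{2,2}$ and is $w_*$ on $E^2$, and $\langle w,-\rangle=\langle\iota,w_*(-)\rangle$. Your detour through $\Delta$ and Proposition~\ref{prop:Delta} is unnecessary, though harmless once the universal vanishing is known.

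For reference, the paper itself gives no proof and simply cites \cite[Lemma~3.4]{KNV25}.
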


The following proposition is a consequence of \cite[Theorem~A]{KNV25} and will be the main ingredient in the proof of Theorem~\ref{thm:general}.

\begin{proposition}\label{prop:criterion}
Let $G$ be a finitely presented group. Suppose that $w\in H^2(G;\F)$ and $z\in H_2(G;\F)$ satisfy
\begin{equation}\label{eq:criterion}
z\in\ker\surg{h}(G),\qquad \bpair{w}{z}=0,\qquad
\rho_w(z)\notin\rho_w(\ker\surg{s}(G)).
\end{equation}
Then there are closed oriented smooth $4$-manifolds with fundamental group $G$ which are homotopy equivalent but not simple homotopy equivalent up to any stabilisations. This remains true when orientation-reversing simple homotopy equivalences are allowed.
\end{proposition}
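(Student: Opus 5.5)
The plan is to realise two bordism classes in $\Omega_4(\xi^{\topcat}(G,w))$ lying in $F_{2,2}$. The first has tertiary invariant $\rho_w(z)$ and the second has tertiary invariant $0$, and both have signature zero. I would then read off the (simple) homotopy classification up to stabilisation from \cite[Theorem~A]{KNV25}. As I recall it, that theorem says the following. For two closed $4$-manifolds with normal $1$-type $\xi^{\topcat}(G,w)$, equal signature and normal $1$-smoothings in $F_{2,2}$, the manifolds become homotopy equivalent after stabilisation exactly when one tertiary invariant can be moved by some element of $\Out_w(G)$ to differ from the other by an element of $\rho_w(\ker\surg{h}(G))$. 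The same statement holds with simple homotopy equivalence and $\ker\surg{s}(G)$. The first thing I would check is that this is indeed the form of Theorem~A; the rest of the argument is bookkeeping around it.

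\emph{Construction.} Choose any class in $F_{2,2}$ whose image in $E^\infty_{2,2}(G,w)$ is $\rho_w(z)$ and whose signature is $0$. The signature can be adjusted by adding copies of the $E_8$-manifold or $K3$, which lie in $F_{0,4}$. By the standard surgery-below-the-middle-dimension argument, this class is represented by a closed $4$-manifold $M_0$ with a $2$-connected normal $1$-smoothing. In particular $\pi_1M_0=G$, since $G$ is finitely presented. Take $N_0$ to represent the trivial class, or $\#$ of suitable copies of $S^2\times S^2$, so that $\ter(N_0)=0$ and $\sigma(N_0)=0$. By Lemma~\ref{lem:characteristic}, $\ks(M_0)=0+\bpair{w}{z}=0$ and $\ks(N_0)=0$. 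Hence by Freedman--Quinn both become smoothable after connected sum with some $k(S^2\times S^2)$. Stabilising does not change the bordism class modulo $F_{0,4}$ or the tertiary invariant, so we may assume $M_0,N_0$ are smooth.

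\emph{Homotopy equivalence.} Since $z\in\ker\surg{h}(G)$, the tertiary invariants differ by an element of $\rho_w(\ker\surg{h}(G))$. Theorem~A then gives a homotopy equivalence $M_0\#k(S^2\times S^2)\simeq N_0\#k(S^2\times S^2)$ for some $k$. We set $M,N$ to be these stabilised (still smooth) manifolds.

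\emph{Non-simplicity.} Suppose $M\#\ell(S^2\times S^2)$ and $N\#\ell(S^2\times S^2)$ were simple homotopy equivalent. Theorem~A would then give $\varphi\in\Out_w(G)$ with $\varphi_*\rho_w(z)\in\rho_w(\ker\surg{s}(G))$. The subgroup $\ker\surg{s}(G)$ is preserved by automorphisms of $G$, by naturality of the assembly map, and $\rho_w$ is $\Out_w(G)$-equivariant. Applying $\varphi^{-1}$ therefore gives $\rho_w(z)\in\rho_w(\ker\surg{s}(G))$, contradicting \eqref{eq:criterion}.

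For orientation-reversing equivalences, I would replace $N$ by $\overline N$. Its signature is still $0$, and its tertiary invariant is $-0=0$; in general, negation is trivial on the $\F$-valued $E_{2,2}$ term. So the same argument applies.

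The main obstacle is matching the hypotheses of \cite[Theorem~A]{KNV25} exactly: whether it requires equal signatures and Kirby--Siebenmann invariants, and how the $\Out_w(G)$-ambiguity and the choice of normal $1$-smoothings enter. I would handle this by arranging $\sigma=0$ and $\ks=0$ on both sides, which is where the hypothesis $\bpair{w}{z}=0$ is used, and by the invariance argument for $\ker\surg{s}(G)$ above.
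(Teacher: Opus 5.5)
Your proposal is correct and follows the paper's proof essentially step for step. You realise $\rho_w(z)$ and $0$ as tertiary invariants with signature zero (adjusting with $E_8$), and use Lemma~\ref{lem:characteristic} with $\bpair{w}{z}=0$ to get $\ks=0$ and hence stable smoothability. You get the stable homotopy equivalence from $z\in\ker\surg{h}(G)$. You rule out stable simple homotopy equivalences using \cite[Theorem~A~(iii)]{KNV25}, the $\Out_w(G)$-invariance of $\ker\surg{s}(G)$ and \cite[Corollary~3.1.2]{Tei92}, and you handle orientation reversal by passing to $-N$. The differences are cosmetic: the paper cites \cite[Theorem~3.2~(iv)]{KNV25} for the homotopy-equivalence direction, and $M_0$ and $N_0$ may in general need different numbers of $S^2\times S^2$ summands, since their Euler characteristics need not agree; neither point affects the argument.
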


\begin{proof}
Let $\xi=\xi^{\topcat}(G,w)$. Let $f\colon M\to B^{\topcat}_{G,w}$ be a normal $1$-smoothing with $\ter(M)=\rho_w(z)\in E^\infty_{2,2}(G,w)$. By taking a connected sum with copies of $E8$ with either orientation if necessary, we can assume that $\sigma(M)=0$. By Lemma~\ref{lem:characteristic}, $\ks(M)=\langle w,z\rangle=0$. Let $N$ be a manifold with fundamental group $G$ and a nullbordant normal $1$-smoothing over $\xi$. Again $\ks(N)=0$ by Lemma~\ref{lem:characteristic}. It follows that $M$ and $N$ are stably smoothable.

By \cite[Theorem~3.2~(iv)]{KNV25}, $M$ and $N$ are homotopy equivalent up to stabilisations by $S^2\times S^2$ since their signatures agree and the difference of their tertiary invariants is $\rho_w(z)$, where $z\in\ker \surg{h}$. Hence after stabilisations, we can assume that $M$ and $N$ are smooth, homotopy equivalent manifolds.

It remains to show $M$ and $N$ are not simple homotopy equivalent up to stabilisations by $S^2\times S^2$. By naturality of $\surg{s}$, $\ker\surg{s}\subseteq H_2(G;\F)$ is invariant under the action of $\Out_w(G)$. As mentioned above, by~\cite[Corollary~3.1.2]{Tei92}, $\ter(M)\in E^\infty_{2,2}(G,w)/\Out_w(G)$ is independent of the choice of normal $1$-smoothing. Since $\rho_w(z)\notin\rho_w(\ker\surg{s}(G))$, it follows that the difference of the tertiary invariants of $M$ and $N$ is not contained in $\rho_w(\ker\surg{s}(G))$ for any choice of normal $1$-smoothings. Thus by \cite[Theorem~A~(iii)]{KNV25}, $M$ and $N$ are not simple homotopy equivalent up to stabilisations by $S^2\times S^2$.

Applying the same argument with $-N$ instead of $N$ shows that there are no orientation-reversing simple homotopy equivalences either.
\end{proof}

\section{Controlling the differentials}\label{sec:finite-quotient}

While elementary, the following lemma is the key input in controlling the differentials in the James spectral sequence. 

\begin{lemma}\label{lem:finite-quotient}
Suppose that
\begin{equation}\label{eq:free-extension}
1\longrightarrow K\longrightarrow E\overset{p}{\longrightarrow}Q\longrightarrow1
\end{equation}
is an extension with $Q$ finite of order $q\geq 2$ and $K$ a finitely generated free group. For $m\geq2$, let
\[
K(m)=[K,K]K^m,\qquad E_m=E/K(m).
\]
Then $E_m$ is finite, and the following assertions hold.
\begin{enumerate}
\item For $d\geq2$, the map $H^d(Q;\F)\to H^d(E_2;\F)$ is zero on
\[
\ker\bigl(p^*\colon H^d(Q;\F)\to H^d(E;\F)\bigr).
\]
\item For $d\geq3$, the map $H^d(Q;\T)\to H^d(E_{q};\T)$ is zero on
\[
\ker\bigl(p^*\colon H^d(Q;\T)\to H^d(E;\T)\bigr).
\]
\end{enumerate}
\end{lemma}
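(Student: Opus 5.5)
The plan is to compare the Lyndon--Hochschild--Serre spectral sequences of \eqref{eq:free-extension} and of the quotient extension
\[
1\longrightarrow A_m\longrightarrow E_m\longrightarrow Q\longrightarrow 1,\qquad A_m:=K/K(m).
\]
\emph{Finiteness.} Since $K$ is free of finite rank $r$, we have $K/[K,K]\cong\Z^r$ and hence $A_m\cong(\Z/m)^r$. So $E_m$ is an extension of the finite group $Q$ by the finite group $A_m$, and is therefore finite. Note that $K(m)$ is characteristic in $K$, hence normal in $E$, so $E_m$ is a well-defined quotient of $E$ and $p$ factors through $E_m$.

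\emph{The free extension.} Let $M$ be $\F$ or $\T$ with trivial action. Since $K$ is free, $H^b(K;M)=0$ for $b\ge2$. Hence the LHS spectral sequence for $E$ has only the rows $b=0,1$, and the only possible differential is
\[
d_2\colon H^{d-2}(Q;H^1(K;M))\to H^d(Q;M).
\]
Therefore $p^*\colon H^d(Q;M)\to H^d(E;M)$ factors as
\[
H^d(Q;M)\twoheadrightarrow E_\infty^{d,0}=\operatorname{coker} d_2\hookrightarrow H^d(E;M),
\]
which gives $\ker p^*=\im d_2^{E}$. Here $H^1(K;M)=\Hom(K_{ab},M)$ with its $Q$-action.

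\emph{Naturality.} The quotient map $E\to E_m$ is a map of extensions over $\mathrm{id}_Q$. It induces a map of LHS spectral sequences from that of $E_m$ to that of $E$. On row $1$ this map is $\varphi_*$, where
\[
\varphi\colon H^1(A_m;M)=\Hom(A_m,M)\to\Hom(K_{ab},M)
\]
is induced by $K\to A_m$. On row $0$ it is the identity of $H^*(Q;M)$. Row $0$ for $E_m$ only receives $d_2$ from row $1$ and higher differentials from higher rows. In any case the image of $d_2^{E_m}$ dies in $E_\infty^{d,0}(E_m)$, so the map $H^d(Q;M)\to H^d(E_m;M)$ vanishes on $\im d_2^{E_m}$.

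Compatibility of differentials gives $d_2^{E}\circ\varphi_*=d_2^{E_m}$ on $H^{d-2}(Q;\Hom(A_m,M))$. Hence it suffices to show that
\[
\varphi_*\colon H^{d-2}(Q;\Hom(A_m,M))\to H^{d-2}(Q;\Hom(K_{ab},M))
\]
is surjective. Then every $x\in\ker p^*$ has the form $d_2^E(\varphi_* y)=d_2^{E_m}(y)$, which maps to zero in $H^d(E_m;M)$.

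\emph{Case (i).} Take $M=\F$ and $m=2$. Every homomorphism $K\to\F$ kills $[K,K]K^2$, so $\varphi$ is an isomorphism of $Q$-modules, and $\varphi_*$ is an isomorphism for all $d\ge2$.

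\emph{Case (ii).} Take $M=\T$ and $m=q$. Then $V:=\Hom(K_{ab},\T)\cong\T^r$ is divisible, and $\varphi$ identifies $\Hom(A_q,\T)$ with the $q$-torsion $Q$-submodule $V[q]$. This gives a short exact sequence of $Q$-modules
\[
0\to V[q]\to V\xrightarrow{\;q\;}V\to0.
\]
Its long exact sequence contains
\[
H^s(Q;V[q])\to H^s(Q;V)\xrightarrow{\;q\;}H^s(Q;V).
\]
Multiplication by $q=|Q|$ annihilates $H^s(Q;V)$ for $s\ge1$, so $\varphi_*$ is surjective for $s=d-2\ge1$, that is, for $d\ge3$. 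This is exactly where the hypothesis $d\ge3$ and the choice of exponent $m=q$ enter.

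\emph{Main obstacle.} The main point needing care is the naturality bookkeeping. One must check that the comparison map on $E_2$-terms is induced by $\varphi$ on row $1$ and by the identity on row $0$, and that $d_2$ commutes with it. For $E_m$ there are additional rows $b\ge2$, but these only contribute further differentials into row $0$, which can only make the kernel larger and so do not affect the argument. I expect no other difficulty.
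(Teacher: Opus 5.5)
Your proposal is correct and follows essentially the same route as the paper. Both arguments identify $\ker p^*$ with the image of the row-one $d_2$ in the two-row LHS spectral sequence, then compare with the spectral sequence of $E_m$ by naturality. The remaining step is that the row-one comparison map is an isomorphism for $\F$ with $m=2$, and is surjective for $\T$ with $m=q$, using $0\to V[q]\to V\xrightarrow{q}V\to 0$ and the fact that $q$ annihilates $H^{s}(Q;V)$ for $s\geq 1$.
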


\begin{proof}
The subgroup $K(m)$ is characteristic in $K$, hence normal in $E$. If $r=\rank K$, then
\[
K/K(m)\cong(\Z/m)^r,
\]
so $E_m$ is finite. Write $A$ for either $\F$ or $\T$. The cohomological Lyndon--Hochschild--Serre spectral sequence for~\eqref{eq:free-extension} has $E_2$-page $H^p(Q;H^q(K;A))$, the action of $Q$ on $H^q(K;A)$ is induced by conjugation in $E$.
Since $K$ is free, $H^q(K;A)=0$ for $q\geq2$. The kernel of the edge map $p^*\colon H^d(Q;A)\to H^d(E;A)$ in degree $d\geq2$ is consequently the image of the $d^2$-differential $\tau\colon H^{d-2}(Q;H^1(K;A))\longrightarrow H^d(Q;A)$.

The quotient map gives a commutative diagram of extensions
\[
\begin{tikzcd}[column sep=small]
	1\arrow[r]&K\arrow[r]\arrow[d]&E\arrow[r]\arrow[d]&Q\arrow[r]\arrow[d,equal]&1\\
	1\arrow[r]&K/K(m)\arrow[r]&E_m\arrow[r]&Q\arrow[r]&1.
\end{tikzcd}
\]

We now prove (i). For $A=\F$ and $m=2$, every homomorphism $K\to\F$ factors through $K/K(2)$. Hence
\[
H^1(K/K(2);\F)\overset{\cong}{\longrightarrow}H^1(K;\F)
\]
is an isomorphism of $Q$-modules. Given $x\in \ker(p^*\colon H^d(Q;\F)\to H^d(E;\F))$ with $d\geq 2$, there exists an element $y\in H^{d-2}(Q;H^1(K;\F))$ with $\tau(y)=x$. Let $y'\in H^{d-2}(Q;H^1(K/K(2);\F))$ be a preimage of $y$. By naturality of the Lyndon--Hochschild--Serre spectral sequence, the $d^2$-differential sends $y'$ to $x$. Hence $x$ is in the kernel of the map $H^d(Q;\F)\to H^d(E_2;\F)$ as claimed. This shows~(i).

For~(ii), let $B=H^1(K;\T)$. As an abelian group, $B\cong(\T)^r$, so multiplication by $m$ is surjective. Thus there is an exact sequence of $Q$-modules
\begin{equation}\label{eq:divisible}
0\longrightarrow B[m]\longrightarrow B\overset{m}{\longrightarrow}B\longrightarrow0,
\end{equation}
where $B[m]$ is defined to be the kernel of multiplication by $m$.
For a finite group $Q$ of order $q$, every cohomology group of positive degree is annihilated by multiplication by $q$. It follows that the long exact sequence in cohomology with coefficients given by \eqref{eq:divisible} with $m=q$ induces for every $j>0$ a surjection $H^j(Q;B[q])\longrightarrow H^j(Q;B)$.
Since $K/K(q)\cong K_{ab}/qK_{ab}$, restriction identifies $H^1(K/K(q);\T)$ with $B[q]$ as $Q$-modules. Taking $j=d-2>0$ above therefore
gives a surjection $H^{d-2}(Q;H^1(K/K(q);\T))\to H^{d-2}(Q;H^1(K;\T))$. We now argue as in the first part. Given $x\in \ker(p^*\colon H^d(Q;\T)\to H^d(E;\T))$ with $d\geq 3$, there exists an element~$y\in H^{d-2}(Q;H^1(K;\T))$ with $\tau(y)=x$. Let $y'\in H^{d-2}(Q;H^1(K/K(q);\T))$ be a preimage of~$y$. By naturality of the Lyndon--Hochschild--Serre spectral sequence, the $d^2$-differential sends~$y'$ to $x$. Hence $x$ is in the kernel of the map $H^d(Q;\T)\to H^d(E_q;\T)$ as claimed. This shows~(ii).
\end{proof}

Fix a finite nontrivial group $G$. We will now construct a map $E^\infty_{2,2}(\Gamma(G),w)\to \F$ for a suitable $w$ which we will use to show that $\rho_w(z)\notin \rho_w(\ker\surg{s}(\Gamma(G)))$ when applying Proposition~\ref{prop:criterion}. We start with some notation and two intermediate lemmas.

Fix an element $\lambda\in H^2(G;\F)$. Let
\[
\Pi=G*G,
\qquad Q_0=G\times G,
\qquad K_0=\ker(\Pi\to Q_0),
\]
and use~\eqref{eq:tower-intro} to define $K_1$, $Q_1=\Pi/K_1$, $n=|Q_1|$, and $\Gamma=\Gamma(G)$. There are quotient maps
\begin{equation}\label{eq:tower}
\Pi\overset{q}{\longrightarrow}\Gamma
\overset{v}{\longrightarrow}Q_1
\overset{u}{\longrightarrow}Q_0.
\end{equation}
By abuse of notation, for any group in this diagram, use its two maps $p_i$ to $G$ to define
\begin{equation}\label{eq:twists}
\lambda_i=p_i^*\lambda,
\qquad w=\lambda_1+\lambda_2.
\end{equation}

\begin{lemma}\label{lem:free-kernels}
The subgroups $K_0$ and $K_1$ are finitely generated free groups. The subgroups $K_1$ and $K_2$ are normal in $\Pi$, and $Q_1$ and $\Gamma$ are finite. The two factor maps $G\to\Pi$ induce embeddings into every group in~\eqref{eq:tower}.
\end{lemma}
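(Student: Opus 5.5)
We prove the statements in the order they appear, deriving everything from the Kurosh subgroup theorem and the first part of Lemma~\ref{lem:finite-quotient}.

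\emph{$K_0$ is free and finitely generated.} The kernel $K_0$ of $\Pi=G*G\to G\times G$ meets every conjugate of each free factor trivially, because each factor $G\to\Pi\to G\times G$ is injective. By the Kurosh subgroup theorem, a subgroup of a free product that meets all conjugates of the factors trivially is free. It has finite index $|G|^2$ in the finitely generated group $\Pi$, hence is finitely generated. (Concretely, $K_0$ is generated by commutators $[a,b]$ with $a,b$ in the two factors, of rank $(g-1)^2$.) As a kernel, $K_0$ is normal in $\Pi$.

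\emph{The first extension.} Apply Lemma~\ref{lem:finite-quotient} to $1\to K_0\to\Pi\to Q_0\to1$, with $Q_0$ finite of order $g^2\ge 4$ since $G$ is nontrivial. We get that $K_1=K_0(2)$ is characteristic in $K_0$, hence normal in $\Pi$, and that $Q_1=\Pi/K_1$ is finite. In particular $n=|Q_1|\geq2$ is well defined.

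\emph{The second extension.} Since $K_1\le K_0$ has finite index, it is finitely generated, and it is free as a subgroup of a free group. Apply Lemma~\ref{lem:finite-quotient} again to $1\to K_1\to\Pi\to Q_1\to1$ with $m=n$. Then $K_2=K_1(n)$ is characteristic in $K_1$, hence normal in $\Pi$, and $\Gamma=\Pi/K_2$ is finite.

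\emph{Embeddings of the factors.} The composite of each factor inclusion $G\to\Pi$ with the projection $\Pi\to Q_0=G\times G$ is the inclusion of the corresponding factor, which is injective. Every group in~\eqref{eq:tower} maps onto $Q_0$ compatibly with $\Pi$. So the composite $G\to\Pi\to X\to Q_0$ is injective for each such group $X$, and therefore $G\to X$ is injective. The injectivity into $\Pi$ itself is standard.

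The only genuinely non-formal input is the freeness of $K_0$, which rests on Kurosh. Once that is in place, everything else follows from the finite-index subgroup rank formula and the first part of Lemma~\ref{lem:finite-quotient}.
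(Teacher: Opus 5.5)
Your proof is correct and follows essentially the same route as the paper. You use Kurosh for the freeness of $K_0$, finite index for finite generation, characteristic subgroups for normality and for finiteness of the quotients, and the composite of each factor map with the projection to $G\times G$ for injectivity; the only cosmetic difference is that you cite the opening part of Lemma~\ref{lem:finite-quotient} for these normality and finiteness steps, where the paper repeats that short argument inline.
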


\begin{proof}
By the Kurosh subgroup theorem, subgroups of $\Pi=G*G$ are free products of free groups and conjugates of subgroups of the factors. Each factor, and hence each of its conjugates, maps injectively to $G\times G$. Hence $K_0$ is a free group as claimed. It has finite index in the finitely generated group $\Pi$, so it is finitely generated.

The quotient $K_0/K_1$ is a finite elementary abelian $2$-group. Thus $Q_1$ is finite, and $K_1$ is a finite-index subgroup of $K_0$, hence finitely generated and free. Likewise, $K_1/K_2$ is finite, so $\Gamma$ is finite.
Since $K_0$ is normal in $\Pi$, and $K_1$ and $K_2$ are characteristic in $K_0$ and $K_1$, respectively, both are normal in $\Pi$.  Finally, the composite of each factor map with its coordinate projection $p_i$ is the identity on $G$. The induced factor maps are therefore embeddings.
\end{proof}

\begin{lemma}\label{lem:free-product}
For the class $w\in H^2(\Pi;\F)$ in~\eqref{eq:twists}, evaluation by $\lambda_1$ factors through $E^\infty_{2,2}(\Pi,w)$. 
\end{lemma}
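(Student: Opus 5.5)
Via Proposition~\ref{prop:Delta}, it suffices to check $\Sq^2_w\lambda_1=0$ in $H^4(\Pi;\F)$ and $\Delta_{\Pi,w}(\lambda_1)=0$. The tool is the Mayer--Vietoris decomposition for the free product $\Pi=G*G$. Since $B\Pi\simeq BG\vee BG$, for every coefficient group $A$ and every degree $d\geq1$ the two factor inclusions $\iota_1,\iota_2\colon G\to\Pi$ induce an isomorphism
\[
(\iota_1^*,\iota_2^*)\colon H^d(\Pi;A)\xrightarrow{\ \cong\ }H^d(G;A)\oplus H^d(G;A),
\]
and dually on homology. We have $p_1\iota_1=\mathrm{id}$, and $p_1\iota_2$ is trivial. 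Hence $\iota_1^*\lambda_1=\lambda$ and $\iota_2^*\lambda_1=0$, and likewise $\iota_1^*w=\iota_2^*w=\lambda$.

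\textbf{Step 1: $\Sq^2_w\lambda_1=0$.} Apply the decomposition in degree~$4$. The restriction to the first factor is $\Sq^2\lambda+\lambda^2=\lambda^2+\lambda^2=0$. The restriction to the second factor is $\Sq^2(0)+\lambda\cdot 0=0$. Since $(\iota_1^*,\iota_2^*)$ is injective in degree~$4$, $\Sq^2_w\lambda_1=0$.

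\textbf{Step 2: $\Delta_{\Pi,w}(\lambda_1)=0$.} Use naturality \eqref{eq:Delta-natural} for the maps $\iota_k$, with $\iota_k^*w=\lambda$ in both cases. This gives
\[
\iota_1^*\Delta_{\Pi,w}(\lambda_1)=\Delta_{G,\lambda}(\lambda),\qquad \iota_2^*\Delta_{\Pi,w}(\lambda_1)=\Delta_{G,\lambda}(0)=0.
\]
So it remains to see that $\Delta_{G,\lambda}(\lambda)=0$. This holds by Lemma~\ref{lem:characteristic}: evaluation by the twisting class $\lambda$ factors through $\rho_\lambda$ for the normal $1$-type $(G,\lambda)$, and Proposition~\ref{prop:Delta} turns this into $\Delta_{G,\lambda}(\lambda)=0$. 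Both restrictions therefore vanish.

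\textbf{Step 3: deduce $\Delta_{\Pi,w}(\lambda_1)=0$ from the vanishing restrictions.} This is the delicate point, because $\Delta$ takes values in a quotient. The decomposition splits the degree-$5$ group as $H^5(\Pi;\T)\cong H^5(G;\T)^2$ and the degree-$3$ group as $H^3(\Pi;\F)\cong H^3(G;\F)^2$. Under these splittings, $j_*\Sq^2_w$ acts componentwise as $j_*\Sq^2_\lambda$ on each factor, since $\iota_k^*w=\lambda$ for both $k$. Hence the subgroup $j_*\Sq^2_w(H^3(\Pi;\F))$ is the direct sum of the two copies of $j_*\Sq^2_\lambda(H^3(G;\F))$, and the quotient splits as a direct sum as well. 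An element of the quotient whose two components vanish is therefore zero, so $\Delta_{\Pi,w}(\lambda_1)=0$. Proposition~\ref{prop:Delta} then shows that evaluation by $\lambda_1$ factors through $E^\infty_{2,2}(\Pi,w)$.
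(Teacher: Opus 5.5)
Your proof is correct and follows the paper's route: split along the two free factors using naturality for the inclusions $\iota_k$ (which pull $w$ back to $\lambda$), use that $\lambda_1$ vanishes on the second factor, and finish on the first factor with Lemma~\ref{lem:characteristic} applied to $(G,\lambda)$. The only difference is packaging: the paper directly asserts that the relevant $d_2$- and $d_3$-differentials for $(\Pi,w)$ are direct sums of those for $(G,\lambda)$, whereas you carry out the same splitting dually through Proposition~\ref{prop:Delta}, checking $\Sq^2_w\lambda_1$ and $\Delta_{\Pi,w}(\lambda_1)$ factorwise and verifying that the quotient in which $\Delta$ lives splits accordingly.
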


\begin{proof}
The proof of this lemma is similar to the proof of \cite[Theorem~C]{KNV25}.
	
We have $H^2(\Pi;\F)\cong H^2(G;\F)\oplus H^2(G;\F)$ and $w=(\lambda,\lambda)$ under this isomorphism. Naturality of the James spectral sequence with respect to the factor inclusions shows that the relevant $d_2$- and $d_3$-differentials for $(\Pi,w)$ are the direct sums of two copies of the $d_2$- and $d_3$-differentials for $(G,\lambda)$, respectively. Since $\lambda_1$ is trivial on the second factor, it thus suffices to show that evaluation by $i_1^*\lambda_1$ factors through $E^\infty_{2,2}(G,\lambda)$, where $i_1\colon G\to \Pi$ is the inclusion of the first factor. This is true by Lemma~\ref{lem:characteristic} since $i_1^*\lambda_1=i_1^*p_1^*\lambda=\lambda$.
\end{proof}

Lemma~\ref{lem:finite-quotient} can now be used to show the following. Together with Proposition~\ref{prop:criterion} it will be used in the next section to prove Theorem~\ref{thm:general}.

\begin{proposition}\label{prop:descent}
For every $\lambda\in H^2(G;\F)$, evaluation by the class $\lambda_1\in H^2(\Gamma(G);\F)$ defined in~\eqref{eq:twists} induces a map
\begin{equation*}
\widehat\lambda_1\colon E^\infty_{2,2}(\Gamma(G),w)\longrightarrow\F.
\end{equation*}
\end{proposition}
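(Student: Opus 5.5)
By Proposition~\ref{prop:Delta}, applied to the group $\Gamma=\Gamma(G)$ and the class $\lambda_1\in H^2(\Gamma;\F)$, it suffices to show two things: that $\Sq^2_w\lambda_1=0$ in $H^4(\Gamma;\F)$, and that $\Delta_{\Gamma,w}(\lambda_1)=0$ in $H^5(\Gamma;\T)/j_*\Sq^2_w(H^3(\Gamma;\F))$. The strategy is to pull both statements back from the finite quotients $Q_0$ and $Q_1$ through the tower~\eqref{eq:tower}. There they are known to vanish after further pullback to $\Pi$ by Lemma~\ref{lem:free-product}. Lemma~\ref{lem:finite-quotient} then converts this vanishing on $\Pi$ into vanishing on the next finite quotient.

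\textbf{Step 1: the class $\Sq^2_w\lambda_1$.} All of $\lambda_1$, $\lambda_2$ and $w$ on $\Gamma$ are pulled back from $Q_0=G\times G$ along $u\circ v$. Hence $\Sq^2_w\lambda_1$ on $\Gamma$ is the pullback of the class $x:=\Sq^2_{w}\lambda_1\in H^4(Q_0;\F)$, by naturality of $\Sq^2$. Lemma~\ref{lem:free-product} and Proposition~\ref{prop:Delta} show that $x$ pulls back to zero in $H^4(\Pi;\F)$. Apply Lemma~\ref{lem:finite-quotient}(i) to the extension $1\to K_0\to\Pi\to Q_0\to 1$, where $K_0$ is finitely generated free by Lemma~\ref{lem:free-kernels}. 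Here $E_2=\Pi/K_1=Q_1$, so $x$ already vanishes in $H^4(Q_1;\F)$. Consequently $\Sq^2_{w}\lambda_1=0$ on $Q_1$, and therefore also on $\Gamma$. In particular $\Delta_{Q_1,w}(\lambda_1)$ is defined.

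\textbf{Step 2: the obstruction $\Delta$.} Choose a lift $\delta\in H^5(Q_1;\T)$ of $\Delta_{Q_1,w}(\lambda_1)$. By the naturality~\eqref{eq:Delta-natural} for $v$, the class $v^*\delta$ represents $\Delta_{\Gamma,w}(\lambda_1)$. By naturality for $q\circ v\colon\Pi\to Q_1$, the class $(vq)^*\delta$ represents $\Delta_{\Pi,w}(\lambda_1)$, and this class is zero by Lemma~\ref{lem:free-product} and Proposition~\ref{prop:Delta}. So $(vq)^*\delta=j_*\Sq^2_w\beta$ for some $\beta\in H^3(\Pi;\F)$.

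This is the step I expect to be the main obstacle: we must remove the indeterminacy $\beta$ before applying Lemma~\ref{lem:finite-quotient}(ii). I would try to lift $\beta$ to $Q_1$. The edge map $H^3(Q_1;\F)\to H^3(\Pi;\F)$ is surjective, because $H^q(K_1;\F)=0$ for $q\geq2$ and the lower rows of the Lyndon--Hochschild--Serre spectral sequence contribute only $E_\infty^{2,1}$; that is, the extension has cohomological dimension one in the fiber. If $E_\infty^{2,1}$ obstructs this, I would instead observe that $H^3(\Pi;\F)=H^3(G;\F)^2$, and that the factor-wise classes lift to $Q_0$ via $p_i^*$ and hence to $Q_1$. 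Given a lift $\beta'$, replace $\delta$ by $\delta-j_*\Sq^2_w\beta'$. This new representative pulls back to $0\in H^5(\Pi;\T)$.

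\textbf{Step 3: applying Lemma~\ref{lem:finite-quotient}(ii).} Apply it to the extension $1\to K_1\to\Pi\to Q_1\to1$ with $q=n=|Q_1|$ and $d=5\geq3$. Since $E_n=\Pi/K_2=\Gamma$, this gives $v^*\delta=0$ in $H^5(\Gamma;\T)$. Hence $\Delta_{\Gamma,w}(\lambda_1)=0$, and Proposition~\ref{prop:Delta} yields the factorization $\widehat\lambda_1$.
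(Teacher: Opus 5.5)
Your proposal is correct and follows the paper's proof essentially step for step. The one difference is in Step 1: the paper obtains $\Sq^2_w\lambda_1=0$ on $\Pi$ by computing it as $\lambda_1\lambda_2$ on $Q_0$, a mixed cup product that dies on $G*G$, whereas you extract it from Lemma~\ref{lem:free-product} via Proposition~\ref{prop:Delta}, which is equally valid. One caveat on Step 2: your spectral-sequence argument for surjectivity of $H^3(Q_1;\F)\to H^3(\Pi;\F)$ does not establish it on its own, because the cokernel of that edge map is exactly $E_\infty^{2,1}$, and having only two nonzero rows does not make it vanish. Rely instead on your fallback of lifting factorwise classes through $Q_0$ via $p_i^*$, which is precisely the paper's argument.
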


\begin{proof}
By Proposition~\ref{prop:Delta}, we have to show that $\Sq^2_w\lambda_1$ and $\Delta_{\Gamma,w}(\lambda_1)$ vanish. 

We first consider $\Sq^2_w\lambda_1$. In $Q_0=G\times G$, we have $\Sq^2_w\lambda_1=\lambda_1^2+(\lambda_1+\lambda_2)\lambda_1
=\lambda_1\lambda_2$.
The class $\lambda_1\lambda_2$ pulls back trivially to $\Pi=G*G$. Apply Lemma~\ref{lem:finite-quotient}(i) in degree four to
\[
1\longrightarrow K_0\longrightarrow\Pi\longrightarrow Q_0\longrightarrow1.
\]
Its prescribed quotient is exactly $Q_1=\Pi/[K_0,K_0]K_0^2$. Consequently $\Sq^2_w\lambda_1=0\in H^4(Q_1;\F)$. By naturality of $\Sq^2_w$, also $\Sq^2_w\lambda_1=0\in H^4(\Gamma;\F)$.
Thus $\Delta_{Q_1,w}(\lambda_1)$ and $\Delta_{\Gamma,w}(\lambda_1)$ are defined.

Write $r\colon\Pi\to Q_1$ for the quotient map, and choose a class $o\in H^5(Q_1;\T)$ representing $\Delta_{Q_1,w}(\lambda_1)$. By Lemma~\ref{lem:free-product}, evaluation by $\lambda_1$ factors through $E_{2,2}^\infty(\Pi,w)$. By Proposition~\ref{prop:Delta}, it follows that $\Delta_{\Pi,w}(\lambda_1)=r^*\Delta_{Q_1,w}(\lambda_1)$ is trivial. Therefore
$r^*o=j_*\Sq^2_w(a)$ for some $a\in H^3(\Pi;\F)$.

Since $H^3(Q_0;\F)\to H^3(\Pi;\F)$ is surjective, so is $H^3(Q_1;\F)\to H^3(\Pi;\F)$. Choose an element $\widetilde a\in H^3(Q_1;\F)$ with $r^*\widetilde a=a$ and let $o':=o-j_*\Sq^2_w(\widetilde a)$.
Then $o'$ is a representative of $\Delta_{Q_1,w}(\lambda_1)$ with $r^*o'=0\in H^5(\Pi;\T)$.

Apply Lemma~\ref{lem:finite-quotient}(ii) in degree five to
\[
1\longrightarrow K_1\longrightarrow\Pi\longrightarrow Q_1\longrightarrow1.
\]
Its prescribed quotient is $\Pi/[K_1,K_1]K_1^{|Q_1|}=\Gamma$ and it follows that $v^*o'=0\in H^5(\Gamma;\T)$. Naturality of $\Delta$ implies that $v^*o'$ is a representative of $\Delta_{\Gamma,w}(\lambda_1)$. Hence it is trivial. Thus the statement follows from Proposition~\ref{prop:Delta} as claimed.
\end{proof}

\section{Proof of Theorem B and the order of $\Gamma(G_0)$}\label{sec:proof}

\begin{proof}[Proof of Theorem~\ref{thm:general}]
	Choose $x\in\ker\surg{h}(G)\setminus\ker\surg{s}(G)$ and a homomorphism $f\colon H_2(G;\F)\to \F$ such that $f(x)=1$ while $\ker\surg{s}(G)\subseteq \ker f$. Let $\lambda\in H^2(G;\F)$ be a representative of $f$. 

Let $\Gamma=\Gamma(G)$ and use the classes $\lambda_1,\lambda_2,w$ of~\eqref{eq:twists}. Write $i_1,i_2\colon G\to\Gamma$ for the factor inclusions, and define $z=(i_1)_*x+(i_2)_*x\in H_2(\Gamma;\F)$. We will now show that Proposition~\ref{prop:criterion} can be applied to $z$.

Naturality of $\surg{h}$ gives $z\in\ker\surg{h}(\Gamma)$.
Moreover, $p_i i_i=\mathrm{id}_G$ and $p_i i_j$ is trivial for $i\ne j$. Hence
$\bpair{\lambda_1}{z}=\bpair{\lambda_2}{z}=1$, and $\bpair{w}{z}=0$.

For every $y\in\ker\surg{s}(\Gamma)$, naturality of $\surg{s}$ gives $(p_1)_*y\in\ker\surg{s}(G)$. Then \[\langle \lambda_1,y\rangle=\langle \lambda,(p_1)_*y\rangle=f((p_1)_*y)=0.\]

By Proposition~\ref{prop:descent}, evaluation by $\lambda_1$ factors through $\rho_w$. The induced map $\widehat\lambda_1$ satisfies $\widehat\lambda_1(\rho_w(z))=\langle \lambda_1,z\rangle=1$ and $\widehat\lambda_1(\rho_w(y))=\langle \lambda_1,y\rangle=0$ for every $y\in \ker\surg{s}(\Gamma)$. 
In particular, $\rho_w(z)\notin\rho_w(\ker\surg{s}(\Gamma))$. Thus Theorem~\ref{thm:general} follows from Proposition~\ref{prop:criterion}.
\end{proof}
It remains to show the following lemma promised in the introduction.
\begin{lemma}
	\label{lem:order}
	Let $G$ have order $g$ and set $r_0=(g-1)^2, n=g^2 2^{r_0}$, and $r_1=1+2^{r_0}(r_0-1)$. Then $\Gamma(G)$ has order $n^{r_1+1}$.
\end{lemma}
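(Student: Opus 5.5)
We compute the order of $\Gamma(G)=\Pi/K_2$ through the tower $K_2\subseteq K_1\subseteq K_0\subseteq\Pi$. The tool is the Schreier index formula: a subgroup of index $i$ in a free group of rank $r$ is free of rank $1+i(r-1)$. We also use that for a free group $F$ of rank $r$, the quotient $F/[F,F]F^m\cong(\Z/m)^r$ has order $m^r$, as in the proof of Lemma~\ref{lem:finite-quotient}.

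\emph{Step 1: the rank of $K_0$.} The subgroup $K_0$ is free by Lemma~\ref{lem:free-kernels} and has index $g^2$ in $\Pi$. Its rank is not given by Schreier directly, since $\Pi$ is not free. Instead we use the rational Euler characteristic, which is multiplicative under finite index. For a free product of finite groups,
\[
\chi(G*G)=\tfrac1g+\tfrac1g-1=\tfrac{2-g}{g}.
\]
Hence
\[
\chi(K_0)=g^2\cdot\tfrac{2-g}{g}=g(2-g),
\]
and therefore
\[
\rank K_0=1-g(2-g)=(g-1)^2=r_0.
\]
An alternative is to view $K_0$ as the fundamental group of a cover of the graph of groups. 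Concretely, $K_0$ is the fundamental group of the Bass--Serre quotient graph $K_0\backslash T$. This graph has $2g$ vertices (cosets of each factor) and $g^2$ edges, so its rank is $g^2-2g+1$, which agrees with the Euler characteristic computation.

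\emph{Step 2: the index of $K_1$ and the rank of $K_1$.} We have $K_0/K_1\cong(\Z/2)^{r_0}$. Hence
\[
n=|Q_1|=|\Pi/K_0|\cdot|K_0/K_1|=g^2 2^{r_0}.
\]
Since $K_1$ has index $2^{r_0}$ in the free group $K_0$, Schreier gives
\[
\rank K_1=1+2^{r_0}(r_0-1)=r_1.
\]

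\emph{Step 3: the order of $\Gamma(G)$.} We have $K_1/K_2\cong(\Z/n)^{r_1}$, so
\[
|\Gamma(G)|=|\Pi/K_1|\cdot|K_1/K_2|=n\cdot n^{r_1}=n^{r_1+1}.
\]

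The only step that is not routine is Step 1, because $\Pi$ is not free and the rank of $K_0$ must be obtained some other way. I would present it through the Euler characteristic formula. For readers preferring elementary arguments, the explicit Bass--Serre graph count serves as a check: $K_0$ acts freely on the Bass--Serre tree because it meets every conjugate of the factors trivially, as noted in Lemma~\ref{lem:free-kernels}.
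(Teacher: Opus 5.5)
Your proof is correct, and Steps 2 and 3 coincide with the paper's argument: $K_1$ has index $2^{r_0}$ in $K_0$, Nielsen--Schreier gives its rank, and then $K_1/K_2\cong(\Z/n)^{r_1}$.

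The only difference is in Step 1. The paper uses the classical fact that the kernel $K_0$ of $G*G\to G\times G$ is freely generated by the $(g-1)^2$ commutators $[a,b]$, with $a\neq 1$ in the first factor and $b\neq 1$ in the second. You instead obtain the rank from the rational Euler characteristic, $\chi(K_0)=g^2\chi(G*G)=g(2-g)$, or from the Bass--Serre quotient graph with $2g$ vertices and $g^2$ edges.

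The paper's route yields an explicit basis but quotes a known theorem. Yours yields only the rank, which is all the lemma needs. Your graph count is self-contained once one knows that $K_0$ meets every conjugate of the factors trivially.
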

\begin{proof}
	The kernel $K_0$ of $G*G\to G\times G$ is freely generated by the commutators of any non-trivial element in the first factor with any non-trivial element in the second factor. Hence it has rank $(g-1)^2$. 

It follows that $|Q_1|=|G\times G|\,|K_0/K_1|=g^2 2^{r_0}=n$. By the Nielsen-Schreier theorem 
\[\rank K_1=1+2^{r_0}(r_0-1)=1+n\left(1-\frac2g\right)=r_1.\]

Finally $K_1/K_2\cong(\Z/n)^{r_1}$, so
\begin{equation*}
|\Gamma(G)|=|Q_1|\,|K_1/K_2|=n^{r_1+1}.\qedhere
\end{equation*}
\end{proof}
We can replace one factor of $G$ in $G*G$ by $C_2=\Z/2\Z$ to obtain the following smaller examples.
\begin{remark}\label{rem:smaller}
	For $G$ as in Theorem~\ref{thm:general}, replace $G*G$ by $\Pi'=G*C_2$ and $G\times G$ by $G\times C_2$ in \eqref{eq:tower-intro}. Use primes for the resulting subgroups and quotients, and write $\Gamma'(G)=\Pi'/K'_2$.
	Choose $x,\lambda$ as in the proof of Theorem~\ref{thm:general}, and nonzero classes $\mu\in H^2(C_2;\F)$ and $\eta\in H_2(C_2;\F)$.
	With the analogous projections and factor inclusions, let
	\[
	w=p_G^*\lambda+p_C^*\mu,\qquad z=(i_G)_*x+(i_C)_*\eta.
	\]
	By \cite[Theorem A (iii)]{HU26}, $\kappa_2^s(C_2)=0$, so $z\in\ker\kappa_2^h(\Gamma'(G))$ and $\langle w,z\rangle=0$.
	
	Evaluation by $p_G^*\lambda$ factors through $E^\infty_{2,2}(\Pi',w)$ as in Lemma~\ref{lem:free-product}: its restrictions to the factors are $\lambda$ and zero.
	The proof of Proposition~\ref{prop:descent} applies with $G\times C_2$ in place of $G\times G$, since $(p_G^*\lambda)(p_C^*\mu)$ vanishes on $\Pi'$ and
	$H^3(G\times C_2;\F)\to H^3(\Pi';\F)$ is surjective.
	Naturality under $p_G$ shows that the resulting map on $E^\infty_{2,2}$ annihilates $\rho_w(\ker\kappa_2^s(\Gamma'(G)))$, whereas its value on $\rho_w(z)$ is one. Proposition~\ref{prop:criterion} therefore applies.
	
	The ranks and orders are now
	\[
	r'_0=g-1,\qquad n'=2g\,2^{g-1},\qquad r'_1=1+2^{g-1}(g-2),
	\]
	and $|\Gamma'(G)|=(n')^{r'_1+1}$. For $G=G_0$, this gives
	\[
	|\Gamma'(G_0)|=2^{135(2+63\cdot2^{128})}\approx 10^{10^{41.94}}.
	\]
	We do not claim that these examples are minimal.
\end{remark}

\bibliographystyle{amsalpha}
\bibliography{classification}
\end{document}